\documentclass[12pt]{article}
\usepackage{amsmath}
\usepackage{amscd}
\usepackage{amsthm}
\usepackage{amssymb} 
\usepackage{latexsym}
\usepackage{eufrak}
\usepackage{euscript}
\usepackage[width=17cm,height=22cm]{geometry}
\usepackage{epsfig}
\usepackage{tikz}
\usepackage{graphics}
\usepackage{array}
\usepackage{enumerate}
\usepackage{authblk}
\theoremstyle{theorem}
\newtheorem{theorem}{Theorem}[section]
\theoremstyle{corollary}
\newtheorem{corollary}{Corollary}[section]
\theoremstyle{lemma}

\theoremstyle{definition}

\theoremstyle{proof}
\theoremstyle{remark}
\newtheorem*{rem}{Remark}
\theoremstyle{example}
\newtheorem{example}{Example}[section]
 \def\e{\textbf{\textit{e}}}
  \def\s{\textbf{\textit{S}}} 
   \def\p{\textbf{\textit{P}}} 
    \def\b{\textbf{\textit{B}}}
     \def\a{\textbf{\textit{A}}} 
      \def\x{\textbf{\textit{x}}} 
       \def\y{\textbf{\textit{y}}}    
        \def\r{\mathcal{R}}     
       
\begin{document}
\title{An eigenvalue localization theorem for stochastic matrices and its application to  Randi\'c matrices}
\author[1,2]{\rm Anirban Banerjee}
\author[1]{\rm Ranjit Mehatari}
\affil[1]{Department of Mathematics and Statistics}
\affil[2]{Department of Biological Sciences}
\affil[ ]{Indian Institute of Science Education and Research Kolkata}
\affil[ ]{Mohanpur-741246, India}
\affil[ ]{\textit {\{anirban.banerjee, ranjit1224\}@iiserkol.ac.in}}
\maketitle
\begin{abstract}
A square matrix is called stochastic (or row-stochastic) if it is non-negative and has each row sum equal to unity. Here, we constitute an eigenvalue localization theorem for a stochastic matrix, by using its principal submatrices. As an application, we provide a suitable bound for the eigenvalues, other than unity, of the Randi\'c matrix of a connected graph.
\end{abstract}
\textbf{AMS classification: }15B51, 15A42, 05C50\\
\textbf{Keywards:} Stochastic matrix, eigenvalue localization, Randi\'c matrix, normalized Laplacian;
\section{Introduction}
Stochastic matrices occur in many fields of research, such as, computer-aided-geometric designs \cite{Pen}, computational biology \cite{New}, Markov chains \cite{Sene}, etc. A stochastic matrix $\s$ is irreducible if its underlying directed graph is strongly connected. 
In this paper, we consider $\s$ to be irreducible. 
 Let \textbf{\textit{e}} be the column vector whose all entries are equal to 1. Clearly, 1 is an eigenvalue of $\s$ with the corresponding  eigenvector \textbf{\textit{e}}. By Perron-Frobenius theorem (see Theorem 8.4.4 in \cite{Horn}), the multiplicity of the eigenvalue 1 is one and all other eigenvalues of $\s$ lie in the closed unit disc $\{z\in\mathbb{C}:|z|\leq1\}$. The eigenvalue 1 is called the Perron eigenvalue (or Perron root) of the matrix $\s$, whereas, the eigenvalues other than 1 are  known as  non-Perron eigenvalues of $\s$.\\
 
 Here, we describe a method for localization of the non-Perron eigenvalues of $\s$. The eigenvalue localization problem for stochastic matrices is not new. Many researchers gave significant contribution to this context \cite{Cve1,Kir1,Kir2,LiLi1,LiLi2}. In this paper, we use Ger{\v s}gorin disc theorem \cite{Ger} to localize the non-Perron eigenvalues of $\s$.  Cvetkovi\'c et al.~\cite{Cve1} and Li et al.~\cite{LiLi1,LiLi2} derived some usefull results, using the fact that any non-Perron eigenvalue of $\s$ is also an eigenvalue of the matrix $\s$-$(\e\e^T)diag(c_1,c_2,\cdots,c_n)$, where $ c_1,c_2,\cdots,c_n\in\mathbb{R} $. 
  
  In \cite{Cve1}, Cvetkovi\'c et al. found  a disc which contains all the non-Perron eigenvalues of $\s.$
  
  \begin{theorem}
\label{st:th3a}
\cite{Cve1}
Let $\s=[s_{ij}]$ be a stochastic matrix, and let
$s_i$ be the minimal element among the off-diagonal entries of the i-th column of $\s$. Taking $\gamma=\max_{i\in \mathbb{N}}(s_{ii}-s_i)$, for any $\lambda\in\sigma(\s)\setminus\{1\}$, we have
$$|\lambda-\gamma|\leq 1-trace(\s)+(n-1)\gamma.$$
\end{theorem}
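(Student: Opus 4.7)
The plan is to follow the hint in the paragraph preceding the theorem: every non-Perron eigenvalue of $\s$ survives a rank-one shift $\s\mapsto\s-\e c^T$, regardless of the choice of $c\in\mathbb{R}^n$. I would then pick $c$ so that the shifted matrix has uniform diagonal equal to $\gamma$, which collapses all Ger{\v s}gorin discs into a single disc centered at $\gamma$.

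First I would establish the preservation claim by the matrix determinant lemma:
\[
\det(\s-\lambda I-\e c^T)=\det(\s-\lambda I)\bigl(1-c^T(\s-\lambda I)^{-1}\e\bigr).
\]
Since $\s\e=\e$ gives $(\s-\lambda I)\e=(1-\lambda)\e$, for $\lambda\neq 1$ the right factor equals $(1-\lambda-c^T\e)/(1-\lambda)$, which is finite. Hence every $\lambda\in\sigma(\s)\setminus\{1\}$ with $\det(\s-\lambda I)=0$ also satisfies $\det(\s-\lambda I-\e c^T)=0$, so $\lambda$ remains an eigenvalue of the shifted matrix.

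Next I would choose $c_j=s_{jj}-\gamma$. By the definition of $\gamma$, $c_j\le s_j$, so each off-diagonal entry $s_{ij}-c_j=s_{ij}-s_{jj}+\gamma\ge s_{ij}-s_j\ge 0$ is non-negative, while every diagonal entry equals $\gamma$. Applying Ger{\v s}gorin's disc theorem to $\s-\e c^T$, the $i$-th Ger{\v s}gorin radius equals
\[
\sum_{j\neq i}(s_{ij}+\gamma-s_{jj})=(1-s_{ii})+(n-1)\gamma-(\operatorname{trace}(\s)-s_{ii})=1-\operatorname{trace}(\s)+(n-1)\gamma,
\]
which is independent of $i$. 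All discs coincide as a single disc centered at $\gamma$ with this radius, yielding the stated bound.

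The only conceptually non-trivial step is the preservation of non-Perron eigenvalues under the rank-one perturbation by $\e c^T$; the determinant-lemma computation, combined with the fact that $\e$ is already a right eigenvector of $\s$ for the eigenvalue $1$, handles it in a single line. The subsequent application of Ger{\v s}gorin is routine once the correct $c$ is chosen, so I do not anticipate further obstacles.
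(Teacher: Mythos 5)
The paper states this theorem as a quoted result from \cite{Cve1} and gives no proof of its own, so there is nothing to compare line by line; judged on its own terms, your argument is correct and is essentially the standard proof, following exactly the hint in the paper's preceding paragraph (note that $(\e\e^T)\mathrm{diag}(c_1,\dots,c_n)=\e c^T$, so your shift is the one the paper alludes to). Your choice $c_j=s_{jj}-\gamma$ does make every diagonal entry of $\s-\e c^T$ equal to $\gamma$ and every off-diagonal entry nonnegative, and the radius computation $\sum_{j\neq i}(s_{ij}-s_{jj}+\gamma)=1-\mathrm{trace}(\s)+(n-1)\gamma$ is right and indeed independent of $i$. The one step stated too casually is the eigenvalue-preservation claim: the identity $\det(\s-\lambda I-\e c^T)=\det(\s-\lambda I)\bigl(1-c^T(\s-\lambda I)^{-1}\e\bigr)$ is meaningless precisely at the $\lambda$ you care about, where $\s-\lambda I$ is singular. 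You should either invoke Brauer's theorem directly, or patch the argument by writing $\det(\s-\lambda I)=(1-\lambda)q(\lambda)$, observing that for $\lambda\notin\sigma(\s)$ both sides equal $q(\lambda)(1-\lambda-c^T\e)$, and concluding that this polynomial identity holds for all $\lambda$, so every root of $q$ (every non-Perron eigenvalue, with multiplicity) is an eigenvalue of $\s-\e c^T$. It is also worth noting that your route is genuinely different from the machinery the paper itself develops for its new results: Theorems 2.1 and 2.2 deflate the Perron eigenvalue by an explicit similarity $\p^{-1}\s\p$ producing the $(n-1)\times(n-1)$ matrices $\s(k)=\s(k|k)-\textbf{\textit{j}}_{n-1}\textbf{s}(k)^T$, which yields a union of $n-1$ distinct Ger{\v s}gorin discs per deleted index; the rank-one Brauer shift you use instead keeps the matrix $n\times n$ and, with your choice of $c$, collapses everything into one disc --- simpler, but coarser, which is precisely the contrast the paper's Example 2.1 is designed to exhibit.
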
 

Theorem \ref{st:th3a} was further modified by Li and Li \cite{LiLi1}. They found another disc with different center and different radius.

\begin{theorem}
\label{st:th3b}
\cite{LiLi1}
Let $\s=[s_{ij}]$ be a stochastic matrix, and let
$S_i=\max_{j\neq i}s_{ji}$. Taking $\gamma'=\max_{i\in \mathbb{N}}(S_i-s_{ii})$, for any $\lambda\in\sigma(\s)\setminus\{1\}$, we have
$$|\lambda+\gamma'|\leq trace(\s)+(n-1)\gamma'-1.$$
\end{theorem}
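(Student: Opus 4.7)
The plan is to follow the template recalled in the introduction: choose a real vector $\mathbf{c}=(c_1,\dots,c_n)^T$, form $B:=\s-\e\mathbf{c}^T$, and apply the row version of Ger\v{s}gorin's disc theorem to $B$ after tuning $\mathbf{c}$ so that the resulting discs collapse onto the single one named in the statement. The transfer step works for any $\mathbf{c}$: if $\y$ is a left eigenvector of $\s$ with eigenvalue $\lambda\neq 1$, then $\s\e=\e$ forces $(\lambda-1)\y^T\e=0$, hence $\y^T\e=0$, and $\y^TB=\y^T\s-(\y^T\e)\mathbf{c}^T=\lambda\y^T$, so $\lambda$ is an eigenvalue of $B$ no matter how $\mathbf{c}$ is chosen.

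The design step is the substantive one, and the shape of the target disc essentially forces the answer. To center every row-disc of $B$ at $-\gamma'$ I would take $c_i=s_{ii}+\gamma'$, which makes $B_{ii}=-\gamma'$ for each $i$. The off-diagonal entries become $B_{ij}=s_{ij}-s_{jj}-\gamma'$, and for $i\neq j$ the chain $s_{ij}\le S_j$ (by the definition of $S_j$) together with $S_j-s_{jj}\le\gamma'$ (by the definition of $\gamma'$) makes each $B_{ij}$ non-positive, so $B$ is entry-wise non-positive. A short computation then shows that every row has the same sum: $\sum_jB_{ij}=1-\sum_jc_j=1-trace(\s)-n\gamma'$, independent of $i$.

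Once the sign of $B$ and this uniform row sum are in place, the Ger\v{s}gorin step is automatic. Because $B\le 0$ we have $\sum_j|B_{ij}|=trace(\s)+n\gamma'-1$, and subtracting $|B_{ii}|=\gamma'$ gives the Ger\v{s}gorin radius $\sum_{j\neq i}|B_{ij}|=trace(\s)+(n-1)\gamma'-1$ in every row. Row-Ger\v{s}gorin then places every eigenvalue of $B$, and in particular every non-Perron eigenvalue of $\s$, inside the disc $|z+\gamma'|\le trace(\s)+(n-1)\gamma'-1$. The only point where I anticipate any real effort is spotting the correct shift $c_i=s_{ii}+\gamma'$; after that, the sign of $B$ is controlled directly by the definition of $\gamma'$ and the uniform row sum follows from the stochasticity of $\s$.
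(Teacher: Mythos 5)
The paper offers no proof of this statement: it is imported verbatim from \cite{LiLi1} as background, so there is nothing in-paper to compare against line by line. Your argument is correct, and it reconstructs exactly the template the introduction attributes to Cvetkovi\'c et al.\ and Li--Li, namely that every non-Perron eigenvalue of $\s$ persists in $\s-\e\mathbf{c}^T=\s-(\e\e^T)diag(c_1,\ldots,c_n)$ because the associated left eigenvector is orthogonal to $\e$; the choice $c_j=s_{jj}+\gamma'$ then centres every Ger{\v s}gorin row disc at $-\gamma'$, the definition of $\gamma'$ makes all off-diagonal entries of $B$ non-positive, and the constant row sum $1-trace(\s)-n\gamma'$ yields the common radius. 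This is a genuinely different mechanism from the one the paper itself develops in Section 2 (Theorems \ref{st:th1}--\ref{st:th4}), which deflates $\s$ by a similarity with $\p$ to the $(n-1)\times(n-1)$ matrices $\s(k)$ and applies Ger{\v s}gorin there: your rank-one route keeps the matrix at size $n$ but collapses all $n$ discs into the single disc demanded by the statement, whereas the paper's route produces a union of $n-1$ generally distinct discs and hence sharper but less compact localization sets. One small wrinkle: $\gamma'$ can be negative (e.g.\ for a diagonally dominant stochastic matrix), in which case $B_{ii}=-\gamma'>0$, so $B$ is not entrywise non-positive and $|B_{ii}|\neq\gamma'$. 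This does not damage the conclusion, because the Ger{\v s}gorin radius only involves off-diagonal entries and can be computed directly as
$$\sum_{j\neq i}|B_{ij}|=-\sum_{j\neq i}B_{ij}=B_{ii}-\sum_{j}B_{ij}=-\gamma'-\bigl(1-trace(\s)-n\gamma'\bigr)=trace(\s)+(n-1)\gamma'-1,$$
but you should derive it this way rather than via $\sum_j|B_{ij}|$ and the claim $B\leq 0$.
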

In this paper, we show that there exist square matrices of order $n-1$, whose eigenvalues are the non-Perron eigenvalues of $\s$. We apply Ger{\v s}gorin disc theorem to those matrices in order to obtain our results. We provide an example where our result works better than  Theorem \ref{st:th3a} and Theorem \ref{st:th3b}. \\
 
 Let $\Gamma=(V,E)$ be a simple, connected, undirected  graph on $n$ vertices.  Two vertices $i,j\in V$ are called neighbours, written as $i\sim j$, if they are connected by an edge  in $E$. For a vertex
$i\in V$, let $d_i$
be its degree and $N_i$ be the set neighbours of the vertex $i$. For two vertices  $i,j\in V$, let $N(i,j)$ be the number of common neighbours of $i$ and $j$, that is, $N(i,j)=|N_{i}\cap N_{j}|$. Let $\textbf{\textit{A}}$ denote the adjacency matrix \cite{Cve} of $\Gamma$ and let $\textbf{\textit{D}}$ be the diagonal matrix of vertex degrees of $\Gamma$. The \textit{Randi\'c} matrix \textbf{\textit{R}} of $\Gamma$ is defined by $\textit{\textbf{R}}=\textbf{\textit{D}}^{-\frac{1}{2}}\textbf{\textit{A}}\textbf{\textit{D}}^{-\frac{1}{2}}$ which is  similar to the matrix $\mathcal{R}=\textbf{\textit{D}}^{-1}\textbf{\textit{A}}$. Thus, the matrices \textbf{\textit{R}} and $\mathcal{R}$ have the same eigenvalues. The matrix $\mathcal{R}$ is an irreducible stochastic matrix and its (i,j)-th entry is
 \begin{eqnarray*}
 \r_{ij}=\begin{cases}
             \frac{1}{{d_i}}, & \text{ if } i\sim j,\\
              0, & \text{ otherwise. }
             \end{cases}
\end{eqnarray*}             
The name \textit{Randi\'c matrix}  was introduced by Bozkurt et al.~\cite{Boz2} because \textbf{\textit{R}} has a connection with Randi\'c index \cite{Li,Ran}. In recent days, Randi\'c matrix becomes more popular to researchers. The Randi\'c matrix has a direct connection with \textit{normalized Laplacian matrix} $\mathcal{L}=\textbf{\textit{I}}_n-\textbf{\textit{R}}$ studied in \cite{Chung} and with $\Delta=\textbf{\textit{I}}_n-\r$ studied is \cite{Ban1,Ban2}. Thus, for any graph $\Gamma$, if $\lambda$ is an eigenvalue of the normalized Laplacian matrix, then $1-\lambda$ is an  eigenvalue of the Randi\'c matrix.\\

In Section 3, we localize non-Perron eigenvalues of $\mathcal{R}$. We provide an upper bound for the largest non-Perron eigenvalue and a lower bound for the smallest non-Perron eigenvalue of $\r$ in terms of common neighbours of two vertices and their degrees. The eigenvalue bound problem was studied previously in many articles \cite{Bau,Chung,LiGu,Rojo}, but the lower bound of the smallest eigenvalue of $\r$ given by Rojo and Soto \cite{Rojo} is the only one which involves the same parameters as in our bound. We recall the Rojo-Soto bound for Randi\'c matrix.
\begin{theorem}\cite{Rojo}
\label{st:th10}
Let $\Gamma$ be a simple undirected connected graph. If $\rho_n$ is the eigenvalue
with the largest modulus among the negative Randi\'c eigenvalues of $\Gamma$, then
\begin{equation}
\label{st:eq1}
|\rho_n|\leq1-\min_{i\sim j}\Big{\{}\frac{N(i,j)}{\max\{d_i,d_j\}}\Big{\}},
\end{equation}
where the minimum is taken over all pairs $(i, j)$ , $1\leq i<j\leq n$, such that the vertices i
and j are adjacent.
\end{theorem}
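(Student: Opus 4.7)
The plan is to reformulate the bound as a positive semi-definite inequality and then establish that inequality by exploiting the common-neighbour structure. Since the Randi\'c matrix $\mathcal{R}=D^{-1}A$ is similar to the symmetric matrix $R=D^{-1/2}AD^{-1/2}$, they have the same spectrum and I work with $R$. Writing $c=\min_{i\sim j}N(i,j)/\max\{d_i,d_j\}$, the inequality $|\rho_n|\le 1-c$ (with $\rho_n<0$) is equivalent to $\rho_n\ge -(1-c)$, i.e.\ to the smallest eigenvalue of $I+R$ being at least $c$.

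A direct expansion gives the identity
$$x^{\top}(I+R)x=\sum_{(i,j)\in E}\Bigl(\frac{x_i}{\sqrt{d_i}}+\frac{x_j}{\sqrt{d_j}}\Bigr)^2,$$
so after the substitution $y_v=x_v/\sqrt{d_v}$ (which turns $\|x\|^2$ into $\sum_v d_v y_v^2$) the theorem reduces to proving the Poincar\'e-type inequality
$$\sum_{(i,j)\in E}(y_i+y_j)^2\ \ge\ c\sum_{v} d_v y_v^2\qquad\text{for every } y\in\mathbb{R}^n. \qquad (*)$$
Equivalently, one must show that the signless Laplacian $Q=D+A$ satisfies $Q\succeq cD$.

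To establish $(*)$ I would use the common-neighbour structure via triangles. For any triangle $T=\{u,v,w\}$, direct computation yields
$$\sum_{e\in T}(y_{e_1}+y_{e_2})^2=(y_u+y_v+y_w)^2+(y_u^2+y_v^2+y_w^2)\ \ge\ y_u^2+y_v^2+y_w^2.$$
Assigning non-negative weights $w_T$ to triangles and summing these local inequalities, the goal is (a) $\sum_{T\ni e}w_T\le 1$ for every edge $e$ so that the left-hand side of the weighted sum is dominated by $\sum_{(i,j)\in E}(y_i+y_j)^2$, and (b) $\sum_{T\ni v}w_T\ge cd_v$ for every vertex $v$ so that the right-hand side dominates $c\sum_v d_v y_v^2$.

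The main obstacle is satisfying (a) and (b) simultaneously. A uniform choice like $w_T=1/N(i,j)$ is ill-defined because the three edges of $T$ may carry different common-neighbour counts, and symmetric averages produce only the weaker constant $2N(i,j)/(d_i+d_j)$. The $\max\{d_i,d_j\}$ in Rojo--Soto's constant suggests that each edge's budget should be split unequally between its two endpoints, favouring the lower-degree one; setting up this asymmetric flow on the ``edge--vertex incidence through triangles'' structure is the crux of the argument. As a parallel approach I would also try a Dobrushin-type coefficient method: one checks directly that $\tfrac12\|\mathcal{R}(i,\cdot)-\mathcal{R}(j,\cdot)\|_1=1-N(i,j)/\max\{d_i,d_j\}$ whenever $i\sim j$, so if one can show that the extremal values of an eigenvector associated with $\rho_n$ must be attained at an adjacent pair of vertices, the classical coupling argument for stochastic matrices delivers the bound at once; the difficulty there is transferred to the combinatorial problem of locating the eigenvector extrema.
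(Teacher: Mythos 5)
This statement is quoted from Rojo and Soto \cite{Rojo}; the paper under review gives no proof of it, so there is nothing internal to compare against. Judged on its own terms, your proposal is a plan rather than a proof: both routes stop exactly at the step you yourself identify as the crux. The reduction is fine --- the identity $x^{\top}(I+R)x=\sum_{(i,j)\in E}\bigl(x_i/\sqrt{d_i}+x_j/\sqrt{d_j}\bigr)^2$ is correct, and so is the equivalence with $D+A\succeq cD$ --- but the triangle-weighting scheme you propose to finish the job is not merely difficult, it is provably impossible in general. Double counting kills it: summing condition (a) over all edges gives $3\sum_T w_T\leq|E|$, while summing condition (b) over all vertices gives $3\sum_T w_T\geq c\sum_v d_v=2c|E|$, so such weights can exist only when $c\leq\tfrac12$. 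For $K_n$ with $n\geq4$ one has $c=(n-2)/(n-1)>\tfrac12$, and the Rojo--Soto bound is attained with equality there ($\rho_n=-1/(n-1)=-(1-c)$), so no amount of slack elsewhere can rescue the argument; discarding the $(y_u+y_v+y_w)^2$ terms and the uncovered portions of the edge sum already loses too much.

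The Dobrushin-type alternative has the same status: the identity $\tfrac12\|\mathcal{R}(i,\cdot)-\mathcal{R}(j,\cdot)\|_1=1-N(i,j)/\max\{d_i,d_j\}$ for $i\sim j$ is correct, but the classical coefficient bounds the modulus of every non-Perron eigenvalue by the maximum of this quantity over \emph{all} pairs, and for non-adjacent pairs with no common neighbour that maximum is $1$, which is trivial. Restricting to adjacent pairs requires knowing that the relevant extrema of an eigenvector for $\rho_n$ occur at adjacent vertices, and you offer no argument for that; it is not true in general that the maximal and minimal entries of such an eigenvector are adjacent, so this route also has a genuine hole. To actually prove the theorem you would need a different mechanism --- for instance, working with the eigenvalue equations at suitably chosen extremal entries of the eigenvector of the symmetric matrix $I+R$ and exploiting the common neighbours of an adjacent pair there, in the spirit of the degree--common-neighbour bounds for Laplacian spectra --- rather than a nonnegative combination of local triangle inequalities.
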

One of the drawbacks of Theorem \ref{st:th10} is that it always produces the trivial lower bound of $\rho_n$, if the graph contains an edge which does not participate in a triangle. Though the bound in Theorem \ref{st:th10} and our bound (Theorem \ref{st:th5}) are incomparable but, in many occasions,  our bound works better than Rojo-Soto bound. We illustrate this by a suitable example.

\section{Localization of the eigenvalues of an irreducible stochastic matrix}

Let $\e_1, \e_2,\ldots,\e_n$ be the standard orthonormal basis for  $\mathbb{R}^n$ and let $\e'=\left[\begin{array}{ccccc}
1&-1&-1&\cdots&-1
\end{array}\right]^T$. For $k\geq 1$, let $\textbf{\textit{j}}_k$ be the $k\times 1$ matrix with each entry equal to 1 and $\textbf{\textit{0}}_k$ be the $k\times 1$ zero matrix.  We define the matrix $\p$  as 
$$\p=\left[\begin{array}{ccccc}
\e&\e_2&\e_3&\ldots&\e_n
\end{array}\right].$$
It is easy to verify that the matrix $\p$ is nonsingular and its inverse  is $$\p^{-1}=\left[\begin{array}{ccccc}
\e'&\e_2&\e_3&\ldots&\e_n
\end{array}\right].$$
We use $\s(i|i)$ to denote the principal submatrix of $\s$ obtained by deleting $i$-th row and the $i$-th column. Now we have the following theorem.
\begin{theorem}
\label{st:th1}
Let $\s$ be a stochastic matrix of order n. Then $\s$ is similar to the matrix
 $$\left[\begin{array}{cc}
1&\x^T\\
\textbf{\textit{0}}_{n-1}&\b\end{array}\right]$$
where $\x^T=\left[\begin{array}{cccc}
s_{12}&s_{13}&\cdots&s_{1n}
\end{array}\right]$, and
$\b=\s(1|1)-\textbf{\textit{j}}_{n-1}\x^T.$
\end{theorem}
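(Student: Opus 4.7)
The plan is to verify directly that the matrix $\p^{-1}\s\p$ has the required block form, using the matrix $\p$ already introduced in the text. Since $\p$ is nonsingular, similarity is automatic once the product is computed; the whole content of the theorem is in recognizing the entries of $\p^{-1}\s\p$.

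First I would compute $\s\p$ column by column. The first column of $\p$ is $\e$, and since $\s$ is stochastic we have $\s\e=\e$; for $j\geq 2$ the $j$-th column of $\p$ is $\e_j$, so the $j$-th column of $\s\p$ is simply the $j$-th column of $\s$. Thus
$$\s\p=\bigl[\,\e\ \big|\ \s\e_2\ \big|\ \cdots\ \big|\ \s\e_n\,\bigr].$$
Next I would apply $\p^{-1}$ column by column. Using the displayed form of $\p^{-1}$, its action on the standard basis is $\p^{-1}\e_1=\e'$ and $\p^{-1}\e_j=\e_j$ for $j\geq 2$. Consequently $\p^{-1}\e=\e'+\e_2+\cdots+\e_n=\e_1$, which produces the first column $\e_1$ of $\p^{-1}\s\p$ and hence the $1$ in the top-left corner together with the zero block $\textbf{\textit{0}}_{n-1}$ underneath it.

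For the remaining columns, I would expand $\p^{-1}\s\e_j=\sum_{i=1}^n s_{ij}\,\p^{-1}\e_i=s_{1j}\e'+\sum_{i=2}^n s_{ij}\e_i$ for $j\geq 2$. Because $\e'$ has a $1$ in the top entry and $-1$ in every other entry, the first coordinate of this vector is $s_{1j}$, giving the top row $[1,\x^T]$ with $\x^T=[s_{12},\ldots,s_{1n}]$, while the lower $(n-1)$ coordinates equal $s_{ij}-s_{1j}$ for $i=2,\ldots,n$. Arranged as the $(n-1)\times(n-1)$ block in the lower right, this is exactly the matrix with $(i,j)$-entry $s_{ij}-s_{1j}$ for $i,j\geq 2$, which is $\s(1|1)-\textbf{\textit{j}}_{n-1}\x^T$ since $\textbf{\textit{j}}_{n-1}\x^T$ is the rank-one matrix whose every row equals $\x^T=(s_{1j})_{j\geq 2}$.

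There is no real obstacle here beyond careful bookkeeping of indices; the only place one can slip is in writing out $\p^{-1}\e_1=\e'$ and propagating the minus signs correctly, which is why I would present the computation column by column rather than as a single matrix product. Once the $(i,j)$-entry $s_{ij}-s_{1j}$ of the lower block is identified, the factorization $\b=\s(1|1)-\textbf{\textit{j}}_{n-1}\x^T$ is immediate, and the theorem follows.
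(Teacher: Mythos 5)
Your proposal is correct and follows essentially the same route as the paper: both conjugate $\s$ by the matrix $\p=[\e\ \e_2\ \cdots\ \e_n]$ and identify the entries of $\p^{-1}\s\p$, the only difference being that you organize the computation column by column (using $\s\e=\e$ and $\p^{-1}\e=\e_1$ directly) while the paper multiplies out the $2\times2$ block partition and verifies the vanishing of the first column entrywise from the row-sum condition. Your bookkeeping of the $(i,j)$-entry $s_{ij}-s_{1j}$ of the lower block and the identification with $\s(1|1)-\textbf{\textit{j}}_{n-1}\x^T$ are both accurate.
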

\begin{proof}
Let $\y=\left[\begin{array}{cccc}
s_{21}&s_{31}&\cdots&s_{n1}
\end{array}\right]^T$.
Then the matrices $\s$, $\p$, $\p^{-1}$ can be partitionoid as,
$$\s=\left[\begin{array}{cc}
s_{11}&\x^T\\
\y&\s(1|1)\end{array}\right],$$
$$\p=\left[\begin{array}{cc}
1&\textbf{\textit{0}}^T_{n-1}\\
\textbf{\textit{j}}_{n-1}&\textbf{\textit{I}}_{n-1} \end{array}\right],$$
$$\p^{-1}=\left[\begin{array}{cc}
1&\textbf{\textit{0}}^T_{n-1}\\
-\textbf{\textit{j}}_{n-1}&\textbf{\textit{I}}_{n-1} \end{array}\right].$$
Now 
\begin{eqnarray*}
\p^{-1}\s\p&=&\left[\begin{array}{cc}
1&\textbf{\textit{0}}^T_{n-1}\\
-\textbf{\textit{j}}_{n-1}&\textbf{\textit{I}}_{n-1}\end{array}\right]
\left[\begin{array}{cc}
s_{11}&\x^T\\
\y&\s(1|1)\end{array}\right]
\left[\begin{array}{cc}
1&\textbf{\textit{0}}^T_{n-1}\\
\textbf{\textit{j}}_{n-1}&\textbf{\textit{I}}_{n-1}\end{array}\right]\\
&=&\left[\begin{array}{cc}
s_{11}&\x^T\\
\y-s_{11}\textbf{\textit{j}}_{n-1}&\s(1|1)-\textbf{\textit{j}}_{n-1}\x^T\end{array}\right]
\left[\begin{array}{cc}
1&\textbf{\textit{0}}_{n-1}\\
\textbf{\textit{j}}_{n-1}&\textbf{\textit{I}}_{n-1}\end{array}\right]\\
&=&\left[\begin{array}{cc}
\sum_{j=1}^n s_{1j}&\x^T\\
\y-s_{11}\textbf{\textit{j}}_{n-1}+\s(1|1)\textbf{\textit{j}}_{n-1}-\textbf{\textit{j}}_{n-1}\x^T\textbf{j}_{n-1}&\s(1|1)-\textbf{\textit{j}}_{n-1}\x^T\end{array}\right].
\end{eqnarray*}
For $i=2,3,\ldots,n$, we have 
$(\p^{-1}\s\p)_{i1}=s_{i1}-s_{11}+\sum_{j=2}^ns_{ij} -\sum_{j=2}^ns_{1j}=0$ and hence the result follows. 
\end{proof}
\begin{theorem}
\label{st:th2}
Let $\s=[s_{ij}]$ be a stochastic matrix  of order n. Then any eigenvalue other than 1 is also an eigenvalue of the matrix 
$$\s(k)=\s(k|k)-\textbf{\textit{j}}_{n-1}\textbf{s}(k)^T,\textbf{ }k=1,2,\ldots, n$$
where $\textbf{s}(k)^T=\left[\begin{array}{cccccc}
s_{k1}&\cdots&s_{k,k-1}&s_{k,k+1}&\cdots&s_{kn}
\end{array}\right]$ is the k-deleted row of $\s$.
\end{theorem}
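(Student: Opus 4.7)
The plan is to reduce the general case to the $k=1$ case already established in Theorem~\ref{st:th1} by means of a permutation similarity. Let $\tau$ denote the transposition $(1\ k)$ and let $Q$ be its permutation matrix, so that $Q^{-1}=Q^T=Q$. Set $\tilde{\s}:=Q\s Q$; this matrix is similar to $\s$, hence has the same spectrum, and it is again stochastic because simultaneous permutation of rows and columns preserves non-negativity and leaves every row sum unchanged.

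By construction $\tilde{\s}_{ij}=s_{\tau(i),\tau(j)}$, so $\tilde{\s}_{11}=s_{kk}$, the off-diagonal part $\tilde{\x}$ of the first row of $\tilde{\s}$ is a permutation of $\textbf{s}(k)$, and the principal submatrix $\tilde{\s}(1|1)$ is obtained from $\s(k|k)$ by the \emph{same} permutation applied to both its rows and its columns. Applying Theorem~\ref{st:th1} to $\tilde{\s}$, we conclude that every non-Perron eigenvalue of $\s$ is an eigenvalue of
$$\tilde{\b}:=\tilde{\s}(1|1)-\textbf{\textit{j}}_{n-1}\tilde{\x}^T.$$

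The final step is to identify the spectrum of $\tilde{\b}$ with that of $\s(k)$. If $Q_\sigma$ denotes the $(n-1)\times(n-1)$ permutation matrix that implements the corresponding permutation $\sigma$ of $\{1,\dots,n-1\}$ on the reduced index set, then $\tilde{\s}(1|1)=Q_\sigma^T\s(k|k)Q_\sigma$ and $\tilde{\x}=Q_\sigma^T\textbf{s}(k)$. Using that $Q_\sigma^T\textbf{\textit{j}}_{n-1}=\textbf{\textit{j}}_{n-1}$ for any permutation matrix, we obtain
$$\tilde{\b}=Q_\sigma^T\s(k|k)Q_\sigma-(Q_\sigma^T\textbf{\textit{j}}_{n-1})(\textbf{s}(k)^TQ_\sigma)=Q_\sigma^T\bigl(\s(k|k)-\textbf{\textit{j}}_{n-1}\textbf{s}(k)^T\bigr)Q_\sigma=Q_\sigma^T\s(k)Q_\sigma,$$
so $\tilde{\b}$ is similar to $\s(k)$ and therefore shares its spectrum, proving the theorem.

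The only delicate point in the argument is the bookkeeping of the permutation $\sigma$ on the reduced index set, but since $\textbf{\textit{j}}_{n-1}$ is fixed by every permutation matrix, the rank-one correction is conjugated by the same $Q_\sigma$ that acts on $\s(k|k)$; no real obstacle arises.
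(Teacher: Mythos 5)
Your proof is correct and follows essentially the same route as the paper: both handle $k>1$ by a permutation similarity that moves index $k$ into the first position and then invoke Theorem~\ref{st:th1}. The only difference is that the paper chooses a cyclic permutation so that the transformed matrix has off-diagonal first row exactly $\textbf{s}(k)^T$ and principal submatrix exactly $\s(k|k)$ (so no further relabeling is needed), whereas your transposition $(1\ k)$ scrambles the remaining indices and you correctly undo this with the extra similarity $\tilde{\b}=Q_\sigma^T\s(k)Q_\sigma$, using that $\textbf{\textit{j}}_{n-1}$ is fixed by every permutation matrix.
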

\begin{proof}
If $k=1$ then the proof is straightforward from Theorem  \ref{st:th1}.\\
For $k>1$, consider the permutation matrix $\p_k=\left[\begin{array}{cccccccc}
\e_2&\e_3&\cdots&\e_{k}&\e_1&\e_{k+1}&\cdots&\e_n
\end{array}\right]$. \\
Therefore, the matrix $\s$ is similar to the matrix 
\begin{eqnarray*}
\p_k^{-1}\s\p_k=\left[\begin{array}{cc}
s_{kk}&\x^T\\
\y&\s(k|k)\end{array}\right],
\end{eqnarray*}
where $\x=\textbf{s}(k)=\left[\begin{array}{cccccc}
s_{k1}&\cdots&s_{k,k-1}&s_{k,k+1}&\cdots&s_{kn}
\end{array}\right]^T$\\ and $\y=\left[\begin{array}{cccccc}
s_{1k}&\cdots&s_{k-1,k}&s_{k+1,k}&\cdots&s_{nk}
\end{array}\right]^T$.\\
Now, applying Theorem \ref{st:th1} to $\p_k^{-1}\s\p_k$, we get that $\s$ is similar to the matrix
 $$\left[\begin{array}{cc}
1&\textbf{s}(k)\\
\textbf{\textit{0}}_{n-1}&\s(k|k)-\textbf{\textit{j}}_{n-1}\textbf{s}(k)^T\end{array}\right].$$
Thus, any eigenvalue of $\s$, other than $1$, is also an eigenvalue of the matrix $\s(k)$, $k=1,2,\ldots,n$. 
\end{proof}
\begin{theorem}
\label{st:th3}
(\textbf{Ger{\v s}gorin}\cite{Ger})
Let $\a=[a_{ij}]$ be an $n\times n$ complex matrix. Then the eigenvalues of $\a$ lie in the region
$$G_\a=\bigcup_{i=1}^n\Big{\{}z\in\mathbb{C}:|z-a_{ii}|\leq \sum_{j\neq i}|a_{ij}|\Big{\}}.$$
\end{theorem}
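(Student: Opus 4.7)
The plan is to prove the Gershgorin disc theorem by the classical eigenvector-coordinate argument. Fix $\lambda \in \sigma(\a)$ and let $\mathbf{v} = (v_1, \ldots, v_n)^T \neq \mathbf{0}$ be a corresponding eigenvector. The first step is to choose an index $i$ that maximizes the modulus of the components of $\mathbf{v}$, i.e.\ $|v_i| = \max_{1 \leq j \leq n} |v_j|$. Since $\mathbf{v}$ is nonzero, this maximum is strictly positive, a fact that will be essential at the final division step.

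Next, I would read off the $i$-th coordinate of the eigenvalue equation $\a \mathbf{v} = \lambda \mathbf{v}$, which gives $\sum_{j=1}^{n} a_{ij} v_j = \lambda v_i$. Isolating the diagonal term and rearranging produces
$$(\lambda - a_{ii})\, v_i \;=\; \sum_{j \neq i} a_{ij}\, v_j.$$
Applying the triangle inequality to the right-hand side and bounding $|v_j| \leq |v_i|$ in each summand, I obtain $|\lambda - a_{ii}|\,|v_i| \leq \bigl(\sum_{j \neq i} |a_{ij}|\bigr)\,|v_i|$. Dividing both sides by the positive number $|v_i|$ yields $|\lambda - a_{ii}| \leq \sum_{j \neq i} |a_{ij}|$, which places $\lambda$ in the $i$-th Gershgorin disc, and hence in the union $G_\a$.

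There is essentially no obstacle in this argument: the only point that requires care is making sure $|v_i| > 0$ so that dividing by it is legitimate, and this is automatic from $\mathbf{v} \neq \mathbf{0}$. The result is standard and the proof reduces to a single application of the triangle inequality applied to one well-chosen coordinate of the eigenvalue equation.
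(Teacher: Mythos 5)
Your proof is correct: it is the standard eigenvector-coordinate argument for the Ger{\v s}gorin disc theorem, and the step you flag (dividing by $|v_i|>0$, guaranteed by choosing the index of a maximal-modulus component of a nonzero eigenvector) is handled properly. Note that the paper itself offers no proof of this statement --- it is quoted as a classical result with a citation to Ger{\v s}gorin --- so there is no in-paper argument to compare against; your write-up is exactly the textbook proof that the citation points to.
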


\begin{theorem}
\label{st:th4}
Let $\s$ be a stochastic matrix of order n. Then the eigenvalues of $\s$ lie in the region 
$$\bigcap_{i=1}^n \Big{[}G_{\s(i)}\cup\{1\}\Big{]},$$
where 
$G_{\s(i)}=\bigcup_{k\neq i}\{z\in\mathbb{C}:|z-s_{kk}+s_{ik}|\leq\sum_{j\neq k}|s_{kj}-s_{ij}|\}.$
\end{theorem}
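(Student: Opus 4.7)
The plan is to combine Theorem \ref{st:th2} with the Ger\v{s}gorin disc theorem (Theorem \ref{st:th3}), treating the Perron eigenvalue $1$ as a separate case since it need not lie in $G_{\s(i)}$.

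First, I would fix an index $i \in \{1,2,\ldots,n\}$ and unpack the entries of the matrix $\s(i)=\s(i|i)-\textbf{\textit{j}}_{n-1}\textbf{s}(i)^T$. Since $\textbf{\textit{j}}_{n-1}\textbf{s}(i)^T$ is the rank-one matrix whose $k$-th row equals $\textbf{s}(i)^T$, the $(k,j)$-entry of $\s(i)$ (indexed by $k,j \in \{1,\ldots,n\}\setminus\{i\}$) is simply $s_{kj}-s_{ij}$. In particular, the diagonal entries are $s_{kk}-s_{ik}$ for $k\neq i$, and the off-diagonal entries are $s_{kj}-s_{ij}$ with $k\neq j$ and $k,j\neq i$. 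Applying Theorem \ref{st:th3} to $\s(i)$ then yields exactly the region
$$G_{\s(i)}=\bigcup_{k\neq i}\Big\{z\in\mathbb{C}:|z-s_{kk}+s_{ik}|\leq \sum_{j\neq k}|s_{kj}-s_{ij}|\Big\},$$
with the understanding that the inner sum is over the column indices of $\s(i)$, i.e.\ over $j\in\{1,\ldots,n\}\setminus\{i,k\}$.

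Next, I would invoke Theorem \ref{st:th2}: every eigenvalue of $\s$ other than $1$ is an eigenvalue of $\s(i)$, and therefore lies in $G_{\s(i)}$. Hence every eigenvalue $\lambda\in\sigma(\s)$ satisfies $\lambda\in G_{\s(i)}\cup\{1\}$, because either $\lambda=1$ or $\lambda$ is a non-Perron eigenvalue and Gershgorin applies. Since $i$ was arbitrary, taking the intersection over $i=1,2,\ldots,n$ gives
$$\sigma(\s)\subseteq\bigcap_{i=1}^n\bigl[G_{\s(i)}\cup\{1\}\bigr],$$
which is the claim.

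The only potentially delicate point, and the one I would be most careful about, is the bookkeeping of indices in the Gershgorin estimate: one must remember that $\s(i)$ is indexed by the set $\{1,\ldots,n\}\setminus\{i\}$, so the sum $\sum_{j\neq k}$ in the statement implicitly excludes $j=i$ as well. There is no real obstacle beyond this notational check, since Theorems \ref{st:th2} and \ref{st:th3} supply the substantive content directly.
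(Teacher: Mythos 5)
Your proposal is correct and follows essentially the same route as the paper: apply Theorem \ref{st:th2} to reduce to the matrices $\s(i)$, compute their entries as $s_{kj}-s_{ij}$, and invoke the Ger{\v s}gorin disc theorem before intersecting over $i$. Your remark about the index bookkeeping (that $\sum_{j\neq k}$ implicitly excludes $j=i$) is a sensible clarification that the paper leaves tacit.
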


\begin{proof}
By Theorem \ref{st:th2}, we have, for all $i$,
$$\sigma(\s)=\sigma(\s(i))\cup\{1\}.$$
By Ger{\v s}gorin disc theorem, $\sigma(\s(i))\subseteq G_{\s(i)}$, for $i=1,2,\ldots,n$. 
Therefore, $$\sigma(\s)\subseteq\bigcap_{i=1}^n \Big{[}G_{\s(i)}\cup\{1\}\Big{]}.$$
Again, applying Theorem \ref{st:th3} to $G_{\s(i)}$, we get
\begin{eqnarray*}
G_{\s(i)}&=&\bigcup_{\substack{k=1,\\k\neq i}}^{n} \Big{\{}z\in\mathbb{C}:|z-\s(i)_{kk}|\leq \sum_{j\neq k}|\s(i)_{kj}|\Big{\}}\\
&=&\bigcup_{\substack{k=1,\\k\neq i}}^n\Big{\{}z\in\mathbb{C}:|z-s_{kk}+s_{ik}|\leq\sum_{j\neq k}|s_{kj}-s_{ij}|\Big{\}}.
\end{eqnarray*}
Hence, the proof is completed.
\end{proof}
\begin{rem}
Theorem \ref{st:th4} works nicely in some occasions even if  Ger{\v s}gorin disc theorem fails to provide a non-trivial result. For example, let $\s$  be an irreducible stochastic matrix with at least one diagonal element zero. Then, by Ger{\v s}gorin disc theorem, $G_\s\supseteq \{z\in\mathbb{C}:|z|\leq1\}$. But, in this case, Theorem \ref{st:th4} may provide a non-trivial eigenvalue inclusion set (see Example~\ref{st:ex1} and Example~\ref{st:ex2}). Again, Theorem~\ref{st:th3a} and Theorem~\ref{st:th3b} always provide  larger single discs, whereas, the eigenvalue inclusion set in Theorem~\ref{st:th4} is a union of smaller regions. Example~\ref{st:ex1} gives a numerical explanation to this interesting fact.
\end{rem}
\begin{example}
\label{st:ex1}Consider the $4\times 4$ stochastic matrix$$\s=\left[\begin{array}{cccc}
0.25&0.25&0.3&0.2\\
0&0.5&0.33&0.17\\
0.6&0.4&0&0\\
0.1&0.2&0.3&0.4
\end{array}\right].$$
Then we have $$\s(1)=\left[\begin{array}{ccc}
0.25&0.03&-0.03\\
0.15&-0.3&-0.2\\
-0.05&0&0.2
\end{array}\right],$$
$$\s(2)=\left[\begin{array}{ccc}
0.25&-0.03&0.03\\
0.6&-0.33&-0.17\\
0.1&-0.03&0.23
\end{array}\right],$$ 
$$\s(3)=\left[\begin{array}{ccc}
-0.35&-0.15&0.2\\
-0.6&0.1&0.17\\
-0.5&-0.2&0.4
\end{array}\right],$$ and
 $$\s(4)=\left[\begin{array}{ccc}
0.15&0.05&0\\
-0.1&0.3&0.03\\
0.5&0.2&-0.3
\end{array}\right].$$
\begin{figure}[h]
\label{st:fig1}
\centering
\includegraphics[width=\textwidth]{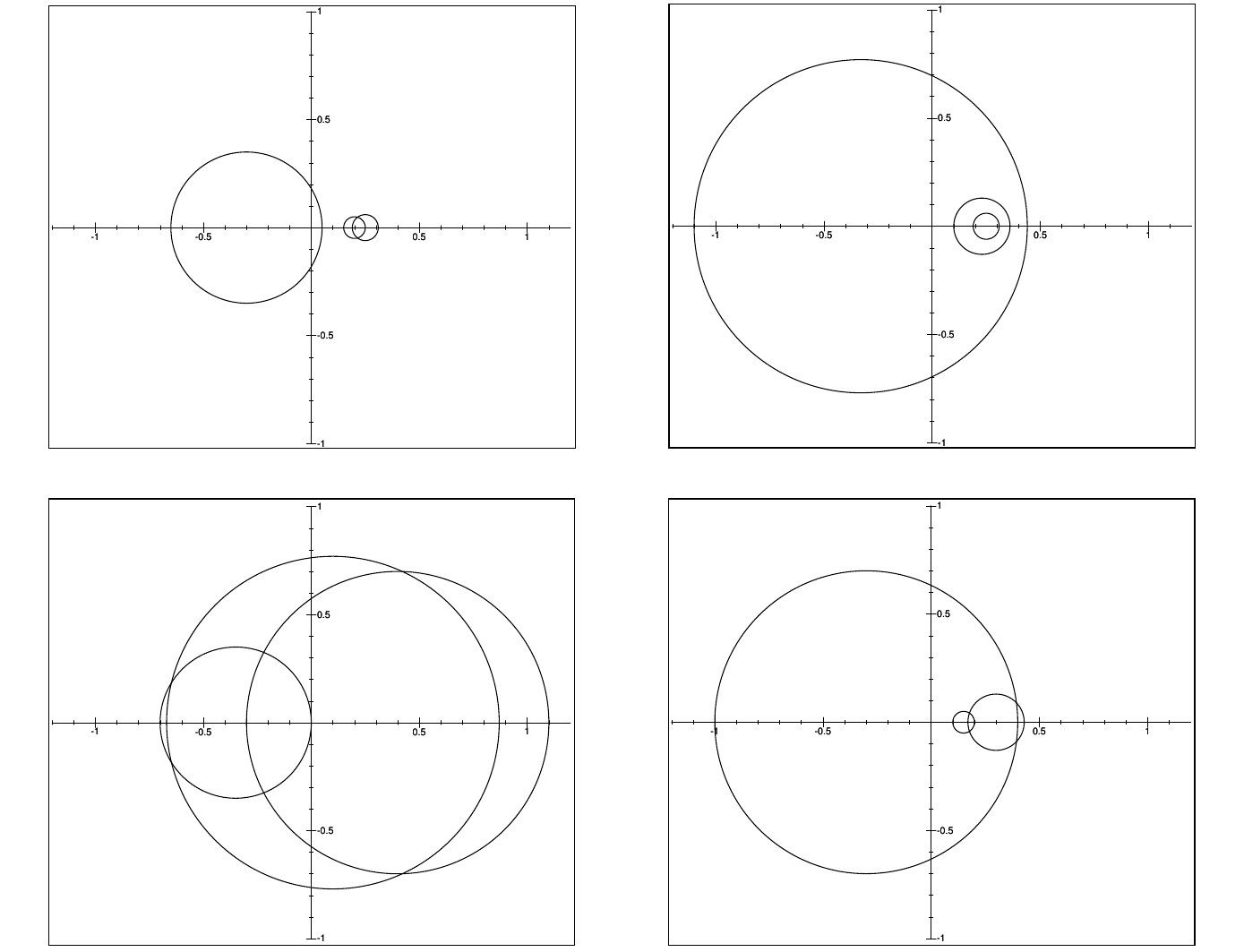}
\caption{The regions $G_\s(k)$, $k=1,2,3,4.$}
\end{figure}

The eigenvalues of $\s$ are $-0.307$, 0.174, 0.282, 1. Figure 1 shows that any eigenvalue other than 1 lies in each $G_{\s(k)}$. Also, from Figure 1, it is clear that $\sigma(\s)\subseteq \bigcap_{k=1}^4 [G_{\s(k)}\cup\{1\}]=G_{\s(1)}\cup \{1\}$. \\

Now, we estimate the eigenvalue inclusion sets in Theorem \ref{st:th3a} and Theorem \ref{st:th3b}. We have $s_1=0$, $s_2=0.2$, $s_3=0.3$, $s_4=0$ and $S_1=0.6$, $S_2=0.4$, $S_3=0.33$, $S_4=0.2$. Therefore,
$$\gamma=\max\{0.25,0.3,-0.3,0.4\}=0.4$$
and
$$\gamma'=max\{0.35,-0.1,0.33,-0.2\}=0.35.$$
By Theorem \ref{st:th3a}, any eigenvalue $\lambda\neq 1$ of $\s$ satisfies
$$|\lambda-0.4|\leq1.05.$$
Again, by Theorem \ref{st:th3b}, for any $\lambda\in\sigma(\s)\setminus\{1\}$, we have
$$|\lambda+0.35|\leq1.2.$$
It is easy to verify that $G_{\s(1)}$ is contained in both the discs.
Therefore, in this example, Theorem \ref{st:th4} works better than the other two.
\end{example}
\section{Bound for Randi\'c eigenvalues}
In this section, we give a nice bound for non-Perron eigenvalues of the Randi\'c matrix of  a connected graph $\Gamma$. Since \textbf{\textit{R}} is symmetric, the eigenvalues of \textbf{\textit{R}}(or $\r$) are all real and lie in the closed interval $[-1,1]$. We arrange the eigenvalues of $\r$ as $$-1\leq\lambda_n\leq\lambda_{n-1}\leq\cdots\leq \lambda_2<\lambda_1=1.$$ Now we have the following theorem.
\begin{theorem}
\label{st:th5}
Let $\Gamma$ be a simple connected graph of order n. Then 
$$-2+\max_{i\in\Gamma}\{\min_{k\neq i}\{\alpha_{ik}\},1\}\leq \lambda_n(\r)\leq\lambda_2(\r)\leq 2-\max_{i\in\Gamma}\{\min_{k\neq i}\{\beta_{ik}\},1\},$$
where, for $k\neq i$, $\alpha_{ik}$ and $\beta_{ik}$ are given by 
$$\alpha_{ik}=\begin{cases}
\frac{1}{d_k}+\frac{2N(i,k)}{\max\{d_i,d_k\}},&\textit{ if }k\sim i\\\
\frac{2N(i,k)}{\max\{d_i,d_k\}},&\textit{ if }k\nsim i
\end{cases}$$
and
$$\beta_{ik}=\begin{cases}
\frac{1}{d_k}+\frac{2}{d_i}+\frac{2N(i,k)}{\max\{d_i,d_k\}},&\textit{ if }k\sim i\\
\frac{2N(i,k)}{\max\{d_i,d_k\}},&\textit{ if }k\nsim i.
\end{cases}$$
\end{theorem}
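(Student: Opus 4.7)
My plan is to apply Theorem \ref{st:th4} directly to the stochastic matrix $\r$ and to evaluate each Gershgorin region $G_{\r(i)}$ explicitly in terms of degrees and common neighbours. Because $\r_{kk}=0$ for every vertex $k$, the $k$-th Gershgorin disc of the matrix $\r(i)=\r(i|i)-\textbf{\textit{j}}_{n-1}\textbf{r}(i)^T$ is centred at $\r_{kk}-\r_{ik}=-\r_{ik}$, which equals $-1/d_i$ when $i\sim k$ and $0$ otherwise.

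The main calculation is the radius $r_{ik}=\sum_{j\neq k,\,j\neq i}|\r_{kj}-\r_{ij}|$. I would split the summation index $j$ into four classes according to its membership in $N_i\cap N_k$ (each contributing $|1/d_k-1/d_i|$), $N_k\setminus N_i$ (contributing $1/d_k$), $N_i\setminus N_k$ (contributing $1/d_i$), or neither (contributing $0$). The counts in the three nontrivial classes are $N(i,k)$, $d_k-N(i,k)$ and $d_i-N(i,k)$, except that when $i\sim k$ the latter two each drop by $1$ because $i\in N_k$ and $k\in N_i$ are already excluded from the sum. Using
\[
N(i,k)\left|\frac{1}{d_k}-\frac{1}{d_i}\right|=\frac{N(i,k)}{\min\{d_i,d_k\}}-\frac{N(i,k)}{\max\{d_i,d_k\}},
\]
the radius telescopes to
\[
r_{ik}=2-\frac{2N(i,k)}{\max\{d_i,d_k\}}-\varepsilon_{ik}\!\left(\frac{1}{d_i}+\frac{1}{d_k}\right),
\]
with $\varepsilon_{ik}=1$ if $i\sim k$ and $0$ otherwise. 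A direct check then gives $-\r_{ik}-r_{ik}=-2+\alpha_{ik}$ and $-\r_{ik}+r_{ik}=2-\beta_{ik}$ in both adjacency cases.

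Since $\r$ is similar to the symmetric matrix $\textbf{\textit{R}}$, every non-Perron eigenvalue $\lambda$ is real. Theorem \ref{st:th4} places $\lambda$ in $G_{\r(i)}$ for each $i$, which on the real line reads $\bigcup_{k\neq i}[-2+\alpha_{ik},\,2-\beta_{ik}]$. Consequently
\[
-2+\min_{k\neq i}\alpha_{ik}\;\leq\;\lambda\;\leq\;2-\min_{k\neq i}\beta_{ik}
\]
for every $i$. Intersecting these inequalities over $i$ and combining with the a priori bounds $-1\leq\lambda\leq 1$ via the identity $\min(1,2-x)=2-\max(x,1)$ yields the two stated inequalities.

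The main obstacle I anticipate is the radius computation itself: the correction $-(1/d_i+1/d_k)$ that appears only when $i\sim k$ arises from two separate $-1$ adjustments to the exclusive-neighbour counts, and this asymmetry must combine exactly with the asymmetric centre $-1/d_i$ to reproduce the extra $2/d_i$ in $\beta_{ik}$ that is absent from $\alpha_{ik}$. Keeping these cancellations straight across both adjacency regimes is the step most prone to algebraic slips.
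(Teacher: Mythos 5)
Your proposal is correct and follows essentially the same route as the paper: reduce to the matrices $\r(i)$ via Theorem \ref{st:th2}/\ref{st:th4}, compute each Ger{\v s}gorin disc of $\r(i)$ by the four-way split of the index $j$ according to adjacency with $i$ and $k$, obtain the same centres $-\r_{ik}$ and radii $2-\tfrac{2N(i,k)}{\max\{d_i,d_k\}}-\varepsilon_{ik}(\tfrac{1}{d_i}+\tfrac{1}{d_k})$, and use reality of the spectrum to pass to the interval endpoints $-2+\alpha_{ik}$ and $2-\beta_{ik}$ before intersecting over $i$. Your explicit incorporation of the trivial bounds $-1\leq\lambda\leq 1$ to produce the $\max\{\cdot,1\}$ terms is a small completeness improvement over the paper, which leaves that step implicit.
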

\begin{proof}
Let $\lambda$ be a non-Perron eigenvalue of $\r$.
By Theorem \ref{st:th2}, $\lambda$  is also an eigenvalue of $\r(i)=\r(i|i)-\textbf{\textit{j}}_{n-1}\textbf{r}(i)^T$, where $\textbf{r}(i)^T$ is the $i$-deleted row of $\r$, for $i=1,2,\ldots,n$. So $\lambda$ lies in the regions $G_{\r(i)}$ with $$G_{\r(i)}=\bigcup_{k\neq i}\Big{\{}z\in\mathbb{C}:|z+r_{ik}|\leq\sum_{j\neq k}|r_{kj}-r_{ij}|\Big{\}}=\bigcup_{\substack{k=1\\k\neq i}}^nG_{\r(i)}(k),$$
where $G_{\r(i)}(k)$ are the Ger{\v s}gorin discs for $\r(i)$. Now, we consider each individual disc of $G_{\r(i)}$. For the vertex $k\in \Gamma$, $k\neq i$, we calculate the centre and the radius of $G_{\r(i)}(k)$. Here two cases may arise.\\
\textbf{Case I: }Let $k\sim i$. Then $r_{ik}=\frac{1}{d_i}$ and $r_{ki}=\frac{1}{d_k}$. Thus, the disc $G_{\r(i)}(k)$ is given by
\begin{eqnarray*}
|z+\frac{1}{d_i}|&\leq&\sum_{j\neq i,k}|r_{kj}-r_{ij}|\\
&=&\sum_{\substack{j\sim i,\\j\sim k}}|r_{kj}-r_{ij}|+\sum_{\substack{j\nsim i,\\j\sim k}}|r_{kj}-r_{ij}|+ \sum_{\substack{j\sim i,\\j\nsim k}}|r_{kj}-r_{ij}|+\sum_{\substack{j\nsim i,\\j\nsim k}}|r_{kj}-r_{ij}|\\
&=&N(i,k)|\frac{1}{d_k}-\frac{1}{d_i}|+\frac{d_k-N(i,k)-1}{d_k}+\frac{d_i-N(i,k)-1}{d_i}+0\\
&=&2-\frac{1}{d_k}-\frac{1}{d_i}-\frac{2N(i,k)}{\max\{d_i,d_k\}}.
\end{eqnarray*}
\textbf{Case II: }If $k\nsim i$. Then $r_{ik}=0$ and $r_{ki}=0$. Thus, we have the disc
\begin{eqnarray*}
|z|&\leq&\sum_{j\neq i,k}|r_{kj}-r_{ij}|\\
&=&\sum_{\substack{j\sim i,\\j\sim k}}|r_{kj}-r_{ij}|+\sum_{\substack{j\nsim i,\\j\sim k}}|r_{kj}-r_{ij}|+ \sum_{\substack{j\sim i,\\j\nsim k}}|r_{kj}-r_{ij}|+\sum_{\substack{j\nsim i,\\j\nsim k}}|r_{kj}-r_{ij}|\\
&=&N(i,k)|\frac{1}{d_k}-\frac{1}{d_i}|+\frac{d_k-N(i,k)}{d_k}+\frac{d_i-N(i,k)}{d_i}+0\\
&=&2-\frac{2N(i,k)}{\max\{d_i,d_k\}}.
\end{eqnarray*}
Now, we consider the whole region $G_{\r(i)}$. Since the eigenvalues of $\r$ are real, by combining  Case I and Case II, we obtain that any non-Perron eigenvalue $\lambda$ of $\r$ must satisfy
$$-2+\min_{k\neq i}\{\alpha_{ik}\}\leq \lambda\leq 2-\min_{k\neq i}\{\beta_{ik}\},$$
for all $i=1,2,\ldots,n.$\\



Therefore, by Theorem \ref{st:th4}, we obtain our required result. 
\end{proof}
\begin{corollary}
Let $\Gamma$ be a simple connected graph. If $\rho_2$ and $\rho_n$ are the smallest and the largest nonzero normalized Laplacian eigenvalue of $\Gamma$, then $$-1+\max_{i\in\Gamma}\{\min_{k\neq i}\{\beta_{ik}\},1\}\leq\rho_2\leq\rho_n\leq 3-\max_{i\in\Gamma}\{\min_{k\neq i}\{\alpha_{ik}\},1\}, $$
where $\alpha_{ik}$, $\beta_{ik}$ are the constants defined as in Theorem \ref{st:th5}.
\end{corollary}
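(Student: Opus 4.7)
The plan is to exploit the identity $\mathcal{L} = \textbf{\textit{I}}_n - \textbf{\textit{R}}$ stated in the introduction, which turns every eigenvalue $\lambda$ of $\textbf{\textit{R}}$ (equivalently of $\r$) into an eigenvalue $1-\lambda$ of the normalized Laplacian $\mathcal{L}$. Since $\lambda \mapsto 1-\lambda$ is an order-reversing affine bijection, the Perron eigenvalue $\lambda_1=1$ of $\r$ becomes the zero eigenvalue of $\mathcal{L}$, the largest non-Perron eigenvalue $\lambda_2$ of $\r$ becomes the smallest nonzero eigenvalue $\rho_2 = 1-\lambda_2$ of $\mathcal{L}$, and the smallest eigenvalue $\lambda_n$ of $\r$ becomes the largest eigenvalue $\rho_n = 1-\lambda_n$ of $\mathcal{L}$.

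With this correspondence in hand, I would apply Theorem~\ref{st:th5} term by term. The upper bound $\lambda_2 \leq 2 - \max_{i}\{\min_{k\neq i}\{\beta_{ik}\},1\}$ transforms, after subtraction from $1$, into the desired lower bound
$$\rho_2 \;\geq\; -1 + \max_{i\in\Gamma}\{\min_{k\neq i}\{\beta_{ik}\},1\}.$$
Likewise, the lower bound $\lambda_n \geq -2 + \max_{i}\{\min_{k\neq i}\{\alpha_{ik}\},1\}$ yields
$$\rho_n \;\leq\; 3 - \max_{i\in\Gamma}\{\min_{k\neq i}\{\alpha_{ik}\},1\}.$$
The middle inequality $\rho_2 \leq \rho_n$ is immediate from $\lambda_n \leq \lambda_2$ under the order-reversing map.

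There is no genuine obstacle here; the corollary is purely a translation of Theorem~\ref{st:th5} through the substitution $\rho = 1 - \lambda$. The only point that needs a moment of care is to make sure the indexing is tracked correctly, so that the bound on $\lambda_2$ (involving $\beta_{ik}$) produces the bound on $\rho_2$ and the bound on $\lambda_n$ (involving $\alpha_{ik}$) produces the bound on $\rho_n$, rather than the reverse pairing. Once this is verified, writing the proof amounts to little more than recording the two substitutions above.
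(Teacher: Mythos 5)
Your proposal is correct and is exactly the intended derivation: the paper states this corollary without proof as an immediate consequence of Theorem \ref{st:th5} via the correspondence $\rho = 1-\lambda$ between normalized Laplacian and Randi\'c eigenvalues, which is precisely the order-reversing substitution you carry out, with the $\beta$-bound correctly paired with $\rho_2$ and the $\alpha$-bound with $\rho_n$.
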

\begin{corollary}
Let $\gamma$ be a connected $r$-regular graph on n vertices. If $\lambda\neq 1$ be any eigenvalue of $\r$, then 
$$-2+\frac{1}{r}\max_i\{\min_{k\neq i}\{\gamma_{ik}\},1\}\leq \lambda\leq2-\frac{1}{r}\max_i\{\min_{k\neq i}\{\delta_{ik}\},1\},$$
where 
\begin{equation*}
\gamma_{ik}=\begin{cases}
1+2N(i,k),&\textit{ if }k\sim i\\
2N(i,k),&\textit{ if }k\nsim i
\end{cases}
\end{equation*} 
and
\begin{equation*}
\delta_{ik}=\begin{cases}
3+2N(i,k),&\textit{ if }k\sim i\\
2N(i,k),&\textit{ if }k\nsim i.
\end{cases}
\end{equation*}
\end{corollary}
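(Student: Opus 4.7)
The plan is to specialize Theorem \ref{st:th5} to the regular setting and read off the corollary after a direct algebraic substitution. For an $r$-regular graph, every vertex has degree $r$, so $d_i = d_k = r$ for all vertices $i, k$ and in particular $\max\{d_i, d_k\} = r$ throughout. The first step I would carry out is to substitute this into the case-defined quantities in Theorem \ref{st:th5}. In the adjacent case ($k \sim i$), this gives $\alpha_{ik} = \frac{1}{r} + \frac{2N(i,k)}{r} = \frac{1 + 2N(i,k)}{r} = \frac{\gamma_{ik}}{r}$ and $\beta_{ik} = \frac{1}{r} + \frac{2}{r} + \frac{2N(i,k)}{r} = \frac{3 + 2N(i,k)}{r} = \frac{\delta_{ik}}{r}$. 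In the non-adjacent case ($k \nsim i$), both $\alpha_{ik}$ and $\beta_{ik}$ collapse to $\frac{2N(i,k)}{r}$, which agrees with $\gamma_{ik}/r$ and $\delta_{ik}/r$ respectively. So the key identities are $\alpha_{ik} = \gamma_{ik}/r$ and $\beta_{ik} = \delta_{ik}/r$ in all cases.

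Once these identifications are in place, the second step is simply to insert them into the two-sided bound of Theorem \ref{st:th5}. Because $1/r$ is a positive constant independent of $k$, it commutes with the inner $\min_{k \neq i}$, giving $\min_{k \neq i}\{\alpha_{ik}\} = \frac{1}{r}\min_{k \neq i}\{\gamma_{ik}\}$ and analogously for $\beta_{ik}$ and $\delta_{ik}$. Factoring $1/r$ out of the outer $\max_i$ in the same way then produces the bounds
\[
-2 + \frac{1}{r}\max_i\{\min_{k \neq i}\{\gamma_{ik}\}, 1\} \leq \lambda \leq 2 - \frac{1}{r}\max_i\{\min_{k \neq i}\{\delta_{ik}\}, 1\},
\]
exactly as stated in the corollary.

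I do not anticipate any substantive obstacle, since the corollary is essentially an algebraic restatement of Theorem \ref{st:th5} under the hypothesis $d_i \equiv r$. The only bookkeeping to be careful with is keeping the two cases (adjacent versus non-adjacent) cleanly separated when verifying the matching $\alpha_{ik} = \gamma_{ik}/r$ and $\beta_{ik} = \delta_{ik}/r$, and handling the interaction of the scalar factor $1/r$ with the clamp by $1$ inside the outer $\max$ expression consistently on both sides of the inequality.
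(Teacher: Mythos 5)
Your substitution $d_i=d_k=r$, giving $\alpha_{ik}=\gamma_{ik}/r$ and $\beta_{ik}=\delta_{ik}/r$ in both the adjacent and non-adjacent cases, is exactly right, and this is the route the paper intends (it states the corollary without proof as an immediate specialization of Theorem \ref{st:th5}). The one step you should not wave through as bookkeeping is ``factoring $1/r$ out of the outer $\max$'': that is not an identity, because the constant $1$ inside the braces does not get rescaled. Theorem \ref{st:th5} gives the lower bound $-2+\max\bigl\{\tfrac{1}{r}M,1\bigr\}$ with $M=\max_i\min_{k\neq i}\gamma_{ik}$, whereas the corollary asserts $-2+\tfrac{1}{r}\max\{M,1\}$, and $\max\bigl\{\tfrac{M}{r},1\bigr\}\neq\tfrac{1}{r}\max\{M,1\}$ whenever $r>1$. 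What rescues the statement is the inequality
\[
\tfrac{1}{r}\max\{M,1\}=\max\bigl\{\tfrac{M}{r},\tfrac{1}{r}\bigr\}\leq\max\bigl\{\tfrac{M}{r},1\bigr\},
\]
valid since $r\geq 1$, which shows the corollary's lower bound sits below the theorem's (and the mirror-image inequality shows its upper bound sits above the theorem's), so the corollary follows as a slightly weaker consequence. You should state this one-line comparison explicitly; as written, your ``commutes with the outer $\max_i$'' claim is false as an equality, even though the conclusion survives.
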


\begin{figure}[h]
\label{st:fig2}
\centering
\includegraphics[width=7cm]{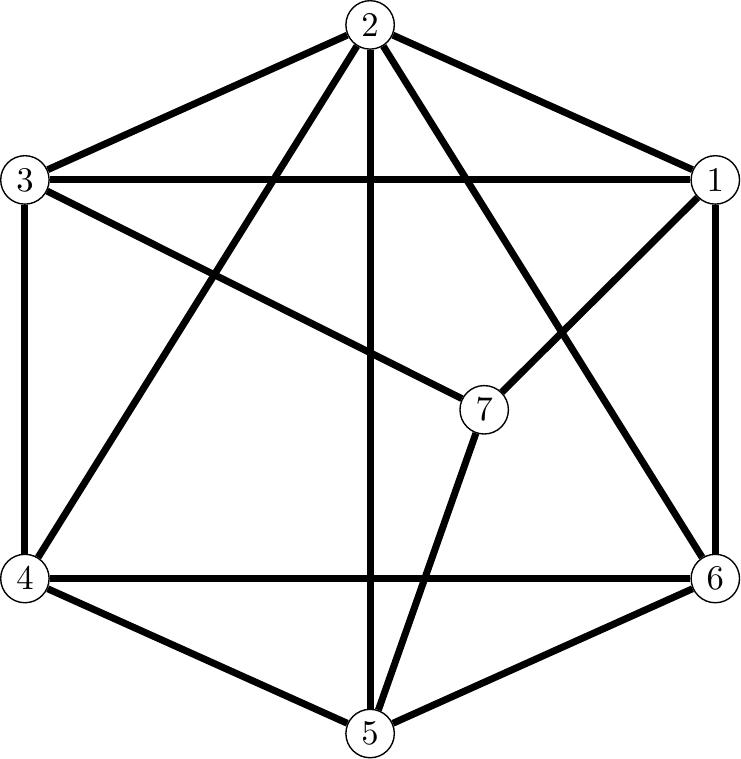}
\caption{A graph containing an edge which is not a part of a triangle. }
\end{figure}
Below we give an example where Theorem~\ref{st:th10} is improved by Theorem~\ref{st:th5}.
\begin{example}
\label{st:ex2}
Let $\Gamma$ be the graph as in Figure 2. The vertex degrees of $\Gamma$ are $d_1=4,$ $d_2=5,$ $d_3=d_4=d_5=d_6=4$, $d_7=3.$ The sets of neighbours of each vertex are given by
$$N_1=\{2,3,6,7\},$$
$$N_2=\{1,3,4,5,6\},$$
$$N_3=\{1,2,4,7\},$$
$$N_4=\{2,3,5,6\},$$
$$N_5=\{2,4,6,7\},$$
$$N_6=\{1,2,4,5\},$$
$$N_7=\{1,3,5\}.$$
Let $\alpha_i=\displaystyle \min_{k\neq i}\{\alpha_{ik}\}$ and $\beta_i=\displaystyle \min_{k\neq i}\{\beta_{ik}\}.$\\

The numbers of common neighbours of the vertex $2\in \Gamma$ with all other vertices are 
$N(2,1)=2$, $N(2,3)=2$, $N(2,4)=3$, $N(2,5)=2$, $N(2,6)=3$ and $N(2,7)=3$. Also note that the vertex $2$ is adjacent to all other vertices other than the vertex $7$. Thus we obtain 
\begin{eqnarray*}
\alpha_2&=&\min\Big{\{}\frac{1}{4}+\frac{4}{5},\frac{1}{4}+\frac{6}{5},\frac{6}{5}\Big{\}}\\
&=&1.05
\end{eqnarray*} and 
\begin{eqnarray*}
\beta_2&=&\min\Big{\{}\frac{1}{4}+\frac{2}{5}+\frac{4}{5},\frac{1}{4}+\frac{2}{5}+\frac{6}{5},\frac{6}{5}\Big{\}}\\
&=&1.2
\end{eqnarray*}
Similarly, for all other vertices of $\Gamma$ we get,
$\alpha_1=0.75$, $\beta_1=1.25$, $\alpha_3=0.75$, $\beta_3=1.25$, $\alpha_4=0.75$, $\beta_4=1$, $\alpha_5=0.333$, $\beta_5=0.833$, $\alpha_6=0.75$, $\beta_6=1$, $\alpha_7=1$, $\beta_7=1$.\\
 
Therefore, using Theorem~\ref{st:th5}, we get
$$\lambda_2\leq 0.75\textit{ and }\lambda_7\geq -0.95.$$
Note that, since $N(5,7)=0$, the lower bound for $\lambda_7$ in (\ref{st:eq1}) becomes $-1$.
\end{example}

\section{Acknowledgement}
We are very grateful to the referees for detailed comments and suggestions, which helped to improve the manuscript. We also thankful to Ashok K.~Nanda for his kind suggestions during writing the manuscript. Ranjit Mehatari is supported by CSIR, India, Grant No.~09/921(0080)/2013-EMR-I.

\end{document}